\theoremstyle{plain}
\newtheorem{thm}{THEOREM}[section]
\newtheorem{lm}[thm]{LEMMA}
\theoremstyle{definition}
\theoremstyle{remark}
\newcommand{\upchi}{\raise1pt\hbox{$\chi$}}
\newcommand{\C}{{\mathord{\mathbb C}}}
\newcommand{\tr}{{\rm Tr}}
\numberwithin{equation}{section}
\newcommand{\un}{{\rm 1\kern -2.5pt l}}
\begin{document}

\title{{\sc  A monotonicity version of a concavity theorem of Lieb}}
\author{
\vspace{5pt}  Eric A. Carlen$$   \\
\vspace{5pt}\small{Department of Mathematics, Hill Center,}\\[-6pt]
\small{Rutgers University,
110 Frelinghuysen Road
Piscataway NJ 08854-8019 USA}\\
 }

\footnotetext[1]{Work partially supported by U.S.
National Science Foundation grant  DMS 2055282.}

\medskip

\maketitle

\begin{abstract} We give a simple proof of a strengthened version of a theorem of Lieb that played a key role in the proof of strong subadditivity of the quantum entropy.
\end{abstract}

\section{Introduction}   
We write $M_n(\C)$ to denote the $n\times n$ complex matrices, $M_n^{+}(\C)$ the subset consisting of positive semidefinite matrices, and $M_n^{++}(\C)$ the 
subset consisting of positive definite matrices.  The following theorem was proved by Lieb in 1973 \cite[Theorem 6]{L73}.
\begin{thm}[Lieb]\label{L4} 
For all self-adjoint $H\in M_n(\C)$ the function
\begin{equation}\label{L4con}
Y \mapsto \tr\left[\exp(H + \log Y)\right]
\end{equation}
is concave on  $M_n^{++}(\C)$. 
\end{thm}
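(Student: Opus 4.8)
The plan is to deduce the concavity from the Gibbs--Legendre variational formula for the trace exponential together with the joint convexity of quantum relative entropy. The input I would use is the identity, valid for every self-adjoint $L\in M_n(\C)$,
\[
\tr\!\left[e^{L}\right] \;=\; \sup_{X\in M_n^{++}(\C)}\Big\{\tr[XL]-\tr[X\log X]+\tr[X]\Big\},
\]
with the supremum attained at $X=e^{L}$. This is just the Legendre duality pairing the convex functional $X\mapsto \tr[X\log X]-\tr[X]$ against $L\mapsto \tr[e^{L}]$, and it is verified by simultaneously diagonalizing $X$ and $L$ and optimizing eigenvalue by eigenvalue.

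Substituting $L=H+\log Y$ and setting
\[
G(X,Y)\;:=\;\tr[XH]+\tr[X]+\tr[X\log Y]-\tr[X\log X],
\]
the formula reads $\tr[\exp(H+\log Y)]=\sup_{X}G(X,Y)$. The decisive observation is that the $Y$-dependent part of $G$ is exactly the negative of a relative entropy: writing $D(X\|Y):=\tr[X\log X]-\tr[X\log Y]$, one has $G(X,Y)=\tr[XH]+\tr[X]-D(X\|Y)$.

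From here the argument is purely structural. The map $(X,Y)\mapsto \tr[XH]+\tr[X]$ is affine, while $(X,Y)\mapsto -D(X\|Y)$ is jointly concave on $M_n^{++}(\C)\times M_n^{++}(\C)$; hence $G$ is jointly concave. I would then invoke the fact that a partial supremum of a jointly concave function is concave in the surviving variable: given $Y_0,Y_1$ and $\lambda\in[0,1]$, choose maximizers $X_0,X_1$ with $G(X_i,Y_i)=\tr[\exp(H+\log Y_i)]$, and estimate
\[
\tr\!\left[\exp\!\big(H+\log(\lambda Y_0+(1-\lambda)Y_1)\big)\right]\;\ge\; G\big(\lambda X_0+(1-\lambda)X_1,\ \lambda Y_0+(1-\lambda)Y_1\big)\;\ge\;\lambda\,\tr[\exp(H+\log Y_0)]+(1-\lambda)\,\tr[\exp(H+\log Y_1)],
\]
where the first inequality is the defining property of the supremum and the second is joint concavity of $G$. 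This is precisely the claimed concavity.

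The main obstacle is the single genuinely nontrivial ingredient, the joint convexity of the relative entropy $D$; everything else is elementary or routine (the supremum is attained at the interior point $X=e^{H+\log Y}\in M_n^{++}(\C)$, so the maximizers $X_i$ exist, and $\log$ is continuous on $M_n^{++}(\C)$). I would supply this ingredient either by citing Lindblad's joint-convexity theorem, or, to keep the proof self-contained, by recognizing $D$ as the operator perspective of the operator convex function $t\mapsto -\log t$ and invoking Effros's theorem, or by differentiating Lieb's own concavity theorem for $(A,B)\mapsto \tr[K^{*}A^{s}KB^{1-s}]$ at the endpoint $s=0$. In each case the scaffold above reduces Theorem~\ref{L4} to a statement about relative entropy, which is the natural object to strengthen when passing to a monotonicity version.
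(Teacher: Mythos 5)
Your argument is correct, but it is genuinely different from the paper's. The paper obtains Theorem~\ref{L4} as a corollary of the strictly stronger monotonicity statement, Theorem~\ref{newmon}: applying that theorem to the block-diagonal doubling map \eqref{phiex} gives superadditivity of $Y\mapsto\tr[e^{H+\log Y}]$, which together with homogeneity of degree one is equivalent to concavity; Theorem~\ref{newmon} itself is proved from the \emph{normalized} Gibbs principle \eqref{gibbs4} plus monotonicity of the relative entropy under positive trace-preserving maps \cite{MHR17}. What you propose is instead a direct proof from the \emph{unnormalized} Legendre identity $\tr[e^{L}]=\sup_{X}\{\tr[XL]-\tr[X\log X]+\tr[X]\}$, the joint convexity of $D$, and the partial-supremum Lemma~\ref{rock}; this is essentially Tropp's proof \cite{Tr12}, which the paper cites and explicitly contrasts with its own method. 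You have correctly handled the one point on which a naive version of this argument founders: as the paper notes, running the same scheme with the constrained formula \eqref{gibbs2} only yields concavity of $\log\tr[e^{H+\log Y}]$, which is weaker. Dropping the constraint $\tr[X]=1$ and adding the linear term $\tr[X]$ is exactly what makes the supremum equal $\tr[e^{H+\log Y}]$ itself, and it then forces you to use joint convexity of $(X,Y)\mapsto\tr[X(\log X-\log Y)]$ on all of $M_n^{++}(\C)\times M_n^{++}(\C)$ rather than only on density matrices --- which does hold, by any of the routes you list (your parenthetical about ``simultaneously diagonalizing $X$ and $L$'' should really say that one first reduces to commuting $X$ by maximizing $\tr[XL]$ over the unitary orbit of $X$, since the two need not commute a priori; alternatively the identity is immediate from Klein's inequality $D(X\Vert e^{L})\geq 0$). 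As for what each approach buys: yours is shorter and self-contained modulo joint convexity, a fact at the level of completely positive maps; the paper's yields the strictly stronger Theorem~\ref{newmon} for all positive unital $\Phi$ at the price of the deeper input \cite{MHR17}, and it illustrates the paper's thesis that monotonicity, unlike concavity, passes freely through the logarithm, so that no analogue of Lemma~\ref{rock} or of the unnormalized variational formula is needed.
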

As a simple consequence of this, Lieb deduced his {\em triple matrix inequality}, a generalization of the Golden-Thompson inequality to 
three self adjoint matrices. This played a fundamental role in the proof of  strong subadditivity of the quantum entropy \cite{LR}. For more recent applications of Theorem~\ref{L4}, see  the influential paper of Tropp \cite{Tr12b}.

We now prove a stronger version of this theorem in terms of monotonicity instead of concavity. 
Recall that a linear map $\Phi: M_n(\C)\to M_m(\C)$ is  {\em positive} if $\Phi(A)\in M_m^+(\C)$ whenever $A\in M_n^+(\C)$, and is unital if $\Phi(I) =I$, and 
{\em trace preserving} if $\tr[\Phi(X)] = \tr[X]$ for all $X\in M_n(\C)$.  We equip $M_n(\C)$ with the Hilbert-Schmidt inner product, and we use $\Phi^\dagger$ to denote the corresponding adjoint of a linear map on $M_n(\C)$. Note that $\Phi$ is unital if and only if $\Phi^\dagger$ is trace preserving. Our main result is:

\begin{thm}\label{newmon} Let $\Phi:M_n(\C)\to M_m(\C)$ be unital and positive. Then for all self-adjoint $H\in M_n(\C)$ and all $Y\in M_m^{++}(\C)$, 
\begin{equation}\label{L4mon}
 \tr\left[\exp(H + \log \Phi^\dagger(Y))\right]  \geq \tr\left[\exp(\Phi(H) + \log Y)\right] \ .
\end{equation}
\end{thm}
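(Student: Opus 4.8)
The plan is to deduce \eqref{L4mon} from Lieb's concavity theorem (Theorem~\ref{L4}) together with the Gibbs variational principle, reducing everything to a monotonicity (data-processing) inequality for the relative entropy. First I would record the elementary consequences of the hypotheses on $\Phi$. Since $\Phi$ is positive, its Hilbert--Schmidt adjoint $\Phi^\dagger:M_m(\C)\to M_n(\C)$ is again positive, because $\tr[\Phi^\dagger(Y)X]=\tr[Y\Phi(X)]\ge 0$ whenever $X,Y\in M^+$; and since $\Phi$ is unital, $\Phi^\dagger$ is trace preserving. Thus $\Phi^\dagger$ maps density matrices to density matrices. After a routine regularization (replacing $\Phi^\dagger(Y)$ by $\Phi^\dagger(Y)+\varepsilon I$ and letting $\varepsilon\downarrow 0$ at the end, which also disposes of any failure of strict positivity) we may assume $\Phi^\dagger(Y)\in M_n^{++}(\C)$, so that all logarithms are finite.

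The main tool is the variational formula for the trace of an exponential: for every self-adjoint $K$,
\[
  \tr[e^{K}]\;=\;\sup_{X>0}\big\{\tr[XK]-\tr[X\log X]+\tr[X]\big\},
\]
with the supremum attained at $X=e^{K}$ (this follows by differentiating the strictly concave right-hand side). Applying this with $K=H+\log\Phi^\dagger(Y)$ on $M_n(\C)$ furnishes a lower bound for the left-hand side of \eqref{L4mon} at \emph{any} trial $X>0$, whereas applying it with $K=\Phi(H)+\log Y$ on $M_m(\C)$ evaluates the right-hand side at its maximizer $X_0:=\exp(\Phi(H)+\log Y)$.

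The decisive step is to feed the optimizer of the right-hand side into the variational problem for the left-hand side, i.e.\ to take the trial point $X=\Phi^\dagger(X_0)$. Using trace preservation in the forms $\tr[\Phi^\dagger(X_0)H]=\tr[X_0\Phi(H)]$ and $\tr[\Phi^\dagger(X_0)]=\tr[X_0]$, the common terms cancel and, writing $D(A\|B):=\tr[A\log A]-\tr[A\log B]$ for the relative entropy, the inequality \eqref{L4mon} is seen to \emph{reduce} to
\[
  D\big(\Phi^\dagger(X_0)\big\|\Phi^\dagger(Y)\big)\;\le\;D\big(X_0\big\|Y\big).
\]
In words: the whole theorem follows once one knows that the relative entropy is nonincreasing under the positive trace-preserving map $\Phi^\dagger$.

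This last inequality is the main obstacle. For \emph{completely} positive trace-preserving maps it is the classical data-processing inequality, which is itself a standard consequence of Theorem~\ref{L4} (equivalently, of the joint convexity of the relative entropy), so in that case the argument is immediate. The genuine content of Theorem~\ref{newmon} is that $\Phi$, and hence $\Phi^\dagger$, is assumed only positive, not completely positive, so the usual Stinespring/ancilla proof of data processing is unavailable. To close this gap I would invoke the monotonicity of the relative entropy under positive trace-preserving maps (a theorem of M\"uller-Hermes and Reeb), or re-derive the single inequality needed here directly, using the integral representation $\log x=\int_0^\infty\big((1+t)^{-1}-(x+t)^{-1}\big)\,dt$ together with the operator convexity of $x\mapsto(x+t)^{-1}$ and the Choi--Davis--Jensen inequality for positive unital maps; care is needed because $x\mapsto e^x$ is not operator convex, which is exactly why the positive (as opposed to completely positive) case is delicate. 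Once the relative-entropy monotonicity is secured, reversing the reduction and removing the regularization $\varepsilon\downarrow 0$ yields \eqref{L4mon}.
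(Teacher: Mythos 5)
Your argument is correct and is essentially the paper's own proof: both rest on the Gibbs variational principle for $\tr[e^{K+\log W}]$ combined with the M\"uller-Hermes--Reeb monotonicity of relative entropy under positive trace-preserving maps, the only (cosmetic) difference being that you use the unnormalized variational formula and insert the single trial point $\Phi^\dagger(e^{\Phi(H)+\log Y})$, while the paper compares suprema over density matrices after restricting to the image of $\Phi^\dagger$. One caveat: your suggested fallback of ``re-deriving'' the needed relative-entropy monotonicity from the integral representation of the logarithm and the Choi--Davis--Jensen inequality is not a workable substitute in the merely positive (non-Schwarz) case --- that monotonicity is precisely the hard content of \cite{MHR17}, which you (like the paper) should simply cite.
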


Before giving the very simple proof, we explain how Theorem~\ref{newmon} implies Theorem~\ref{L4}.   Let $\Phi: M_n(\C) \to M_{2n}(\C)$ be defined by 
\begin{equation}\label{phiex}
\Phi(H) = \left[\begin{array}{cc} H & 0\\ 0 & H\end{array}\right] \quad{\rm and\ hence}\quad 
\Phi^\dagger\left(\left[\begin{array}{cc} Y_{1} & 0\\0 & Y_{2}\end{array}\right]\right) = Y_{1}+Y_{2}\ . 
\end{equation}
 Thus,
$$\tr[e^{H+\log (Y_1+Y_2)}]  \geq  \tr[e^{H+\log Y_1}]  + \tr[e^{H+\log Y_2}]\ .$$
Since $Y \mapsto  \tr[e^{H+\log Y}]$ is homogeneous of degree one, this is the same as concavity. 
Note that this particular map $\Phi$ is not only positive; it is completely positive. 

Our proof is based on the  well-known and elementary Gibbs variational principle for the free energy in terms of the entropy $S(X) = -\tr[X\log X]$  of a density matrix $X$. This states that  for all self-adjoint $K\in M_n(\C)$
\begin{equation}\label{gibbs2}
\log(\tr[e^{K}]) =  \sup\{ \tr[X K] - \tr[X \log X]\ :\ X\in M_n^{++}(\C)\ , \tr[X] =1\ \}\ .
\end{equation}
A short  proof from scratch can be found in Appendix  A of \cite{CL19}.  There is a simple variant involving the relative entropy 
$$D(X||Y) = \tr[X(\log X - \log Y)]$$
of two density matrices:
For $W \in M_n^{++}(\C)$ replace $K$ with $K + \log W$ in \eqref{gibbs2} to conclude that
 for all self-adjoint $K\in M_n(\C)$ and all $W\in M_m^{++}(\C)$, 
\begin{equation}\label{gibbs4}
\log(\tr[e^{K+\log W}]) =  \sup\{ \tr[XK] - D(X||W)\ :\ X\in M_n^{++}(\C)\ , \tr[X] =1\ \}\ .
\end{equation}

We shall also use the result due to M\"uller-Hermes and Reeb \cite{MHR17} that the relative entropy is monotone under positive trace-preserving maps 
$\Phi^\dagger$. 
That is, for all such maps $\Phi^\dagger$, and all density matrices $X,Y$
\begin{equation}\label{MHR}
D(\Phi^\dagger(X)||\Phi^\dagger(Y)) \leq D(X||Y)\ .
\end{equation}
A somewhat weaker result is due to Uhlmann, who proved in 1977 that \eqref{MHR} is true whenever 
$\Phi^\dagger$ is the dual of a unital {\it Schwarz map}; i.e., a unital map $\Phi$ such that $\Phi(A^*A) \geq \Phi(A)^*\Phi(A)$. Earlier still in 1973, 
Lindblad had shown that \eqref{MHR} was valid unital completely positive  $\Phi$.  Lindblad's proof relied on the Lieb Concavity Theorem \cite{L73}.
 It is well known that every completely positive unital map is  a Schwarz map, and it is evident that Schwarz maps are positive. 
By now, quite simple proofs of the monotonicity of the relative entropy under the dual of a unital Schwarz map are known; see e.g., \cite{P03}.  The deep result in \cite{MHR17} turns on ideas introduced by Beigi \cite{B13}.

\begin{proof}[Proof of Theorem~\ref{newmon}] By \eqref{gibbs4},
\begin{eqnarray*} 
\log(\tr[e^{H+\log \Phi^\dagger(Y)}]) &=&  \sup\{ \tr[X H ] - D(X|| \Phi^\dagger(Y))\ :\ X\in M_n^{++}(\C)\ , \tr[X] =1\ \}\\
&\geq&  \sup\{ \tr[\Phi^\dagger(W) H ] - D(\Phi^\dagger(W)|| \Phi^\dagger(Y))\ :\ W\in M_m^{++}(\C)\ , \tr[W] =1\ \}\\
&\geq&  \sup\{ \tr[W \Phi(H) ] - D(W || Y)\ :\ W\in M_m^{++}(\C)\ , \tr[W] =1\ \}\\
&=& \log \tr\left[e^{\Phi(H) + \log Y}\right]  \ .
\end{eqnarray*}
where we used $\{\  \Phi^\dagger(W) \ :\  W\in M_m^{++}(\C)\ , \tr[W] =1\ \} \subset \{\  X\in M_n^{++}(\C)\ , \tr[X] =1\ \}$ and \eqref{MHR}.  
Exponentiating both sides of the inequality yields \eqref{L4mon}.
\end{proof} 

While Theorem~\ref{newmon} is strictly stronger than Theorem~\ref{L4}, the greatest interest in it may lie in the very simple proof that its proof provides of Theorem~\ref{L4}.  This is an interesting example of how it may be easiest to prove a concavity result by first proving a monotonicity result, and then applying that to the particular map $\Phi$ that is defined in \eqref{phiex}. 

It is interesting to observe another advantage of the monotonicity approach to convexity or concavity inequalities.   A duality method for proving convexity and concavity inequalities was introduced by myself and Lieb in \cite{CL08} which  uses the lemma:

\begin{lm}\label{rock}
If $f(x,y)$ is jointly convex in $x,y$,  then $g(x) := \inf_y f(x,y)$ is convex.  If $f(x,y)$ is jointly concave in $x,y$,  then $g(x) := \sup_y f(x,y)$ is  concave.  
 \end{lm}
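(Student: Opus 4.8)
The plan is to handle both assertions with a single argument, since the concave/supremum statement reduces immediately to the convex/infimum statement upon replacing $f$ by $-f$: this turns joint concavity into joint convexity and turns $\sup_y$ into $\inf_y$, so it suffices to prove that $g(x) := \inf_y f(x,y)$ is convex whenever $f$ is jointly convex in $(x,y)$.

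To establish the convex case I would fix two points $x_0,x_1$ in the domain and $\lambda\in[0,1]$, set $x_\lambda := (1-\lambda)x_0 + \lambda x_1$, and verify the defining inequality $g(x_\lambda)\le (1-\lambda)g(x_0)+\lambda g(x_1)$. The one point demanding care is that the infimum defining $g$ need not be attained, so instead of exact minimizers I would use approximate ones: given $\varepsilon>0$, choose $y_0,y_1$ with $f(x_i,y_i)\le g(x_i)+\varepsilon$ for $i=0,1$. Setting $y_\lambda := (1-\lambda)y_0+\lambda y_1$, so that $(x_\lambda,y_\lambda)=(1-\lambda)(x_0,y_0)+\lambda(x_1,y_1)$, and using first the definition of the infimum and then the joint convexity of $f$, I obtain
\[
g(x_\lambda)\le f(x_\lambda,y_\lambda)\le (1-\lambda)f(x_0,y_0)+\lambda f(x_1,y_1)\le (1-\lambda)g(x_0)+\lambda g(x_1)+\varepsilon .
\]
Letting $\varepsilon\to 0$ yields the desired inequality, and reversing signs as above then gives the concave case.

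The main (and essentially only) obstacle is the bookkeeping around the infimum rather than any genuine analytic difficulty: one must allow for the infimum not being attained, which the $\varepsilon$-approximation handles, and one should observe that the argument remains valid when $g$ takes the value $-\infty$ at some point, in which case the asserted inequality holds trivially under the usual conventions for extended-real-valued functions. No compactness, continuity, or lower-semicontinuity hypothesis on $f$ is required; the result is a purely algebraic consequence of joint convexity together with the fact that the map $(y_0,y_1)\mapsto (1-\lambda)y_0+\lambda y_1$ is an admissible convex combination in the second variable.
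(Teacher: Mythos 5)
Your proof is correct: the $\varepsilon$-approximation of the infimum combined with joint convexity along the segment $(x_\lambda,y_\lambda)$ is exactly the standard argument, and it is essentially the same simple proof the paper defers to in \cite{R74} and \cite{CL08}. The reduction of the concave case to the convex case via $f\mapsto -f$ and your remark about the $-\infty$ case are both fine.
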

 
This may be found in \cite[Theorem 1]{R74}, and the simple proof is also given in \cite{CL08}.  Since $(X,W) \mapsto D( X || W)$, is jointly convex,  
for fixed $K$,
$(X,W) \mapsto \tr[XK] - D( X || W)$, is jointly concave. Then by Lemma~\ref{rock} and \eqref{gibbs4}, $W \mapsto \log \tr[ e^{K+ \log W}]$ is concave.   
However, this is a weaker statement than Theorem~\ref{L4} since if $g(x) = \log(f(x))$ with $f$ positive and twice continuously differentiable. 
$$g''(x) = -\left(\frac{f'(x)}{f(x)}\right)^2 +\frac{1}{f(x)}f''(x)\ .$$
Thus, concavity of $f$ implies concavity of $\log f$, but not the  other way around.  However, monotonicity of $f$ is equivalent to monotonicity of $\log f$. 
For this reason, we could simply use the Gibbs variational principle to prove our Theorem~\ref{newmon}.  Tropp \cite{Tr12} found an ingenious variational representation of  $W \mapsto \tr[ e^{K+ \log W}]$ which allowed him to give a proof of Theorem~\ref{L4}  using the joint convexity of the relative entropy and Lemma~\ref{rock}, along the lines of \cite{CL08}. However,  when using duality to to prove the monotonicity theorem, the logarithm is not an issue because log monotonicity is the same as monotonicity, and we do not need Lemma~\ref{rock}.  Already in 1973 Epstein \cite{E73} gave a second proof of Theorem~\ref{L4} using the theory of Herglotz functions, but this is considerably more involved than the present proof. For more infromation, see \cite{C22}. 

\medskip
\noindent{\bf Aclknowledgement} I thank Aleksander  M\"uller-Hermes for helpful correspondence on his work with Reeb.

\end{document}